\newcommand{\R}{\mathbb{R}}
\newcommand{\Z}{\mathbb{Z}}
\DeclareMathOperator{\Diff}{Diff}
\DeclareMathOperator{\Homeo}{Homeo}
\DeclareMathOperator{\Fix}{Fix}
\DeclareMathOperator{\supp}{supp}
\DeclareMathOperator{\Int}{Int}
\newtheorem{theorem}{Theorem}
\newtheorem{corollary}[theorem]{Corollary}
\theoremstyle{definition}
\newtheorem{remark}[theorem]{Remark}
\theoremstyle{definition}
\newtheorem{question}[theorem]{Question}
\theoremstyle{definition}
\newtheorem{definition}[theorem]{Definition}
\begin{document}

\title{Smooth gluing of group actions and applications}
\author{Kiran Parkhe}

\thanks{2010 \textit{Mathematics Subject Classification}. Primary 37C85; Secondary 57M60, 37C05}
\thanks{\textit{Key words and phrases}. Group action, manifold with boundary, gluing, smoothing, Heisenberg group, distortion element.}

\begin{abstract}
Let $M_1$ and $M_2$ be two $n$-dimensional smooth manifolds with boundary. Suppose we glue $M_1$ and $M_2$ along some boundary components (which are, therefore, diffeomorphic). Call the result $N.$ If we have a group $G$ acting continuously on $M_1,$ and also acting continuously on $M_2,$ such that the actions are compatible on glued boundary components, then we get a continuous action of $G$ on $N$ that stitches the two actions together. However, even if the actions on $M_1$ and $M_2$ are smooth, the action on $N$ probably will not be smooth.

We give a systematic way of smoothing out the glued $G$-action. This allows us to construct interesting new examples of smooth group actions on surfaces, and to extend a result of Franks and Handel \cite{F&H} on distortion elements in diffeomorphism groups of closed surfaces to the case of surfaces with boundary.
\end{abstract}
\maketitle

\section{Introduction}
For any manifold with boundary $M,$ we denote the boundary by $\partial M.$ We assume manifolds are smooth. We write $\Homeo(M)$ and $\Diff^r(M)$ for homeomorphisms or $C^r$ diffeomorphisms of $M,$ respectively. If $G$ is a discrete group, a \textit{continuous (smooth) action} of $G$ on $M$ is a homomorphism $G \to \Homeo(M) (\Diff^\infty(M)).$

Let $M_1$ and $M_2$ be manifolds with boundary, and let $f_1$ and $f_2$ be homeomorphisms of $M_1$ and $M_2,$ respectively. Suppose we glue in pairs some diffeomorphic boundary components of $M_1$; call the result $N.$ If $f_1$ is compatible on the boundary components that we glue, we get an induced homeomorphism $\mathfrak{g}(f_1) \in \Homeo(N).$ Similarly, if we glue boundary components of $M_1$ to boundary components of $M_2,$ calling the result $N,$ and if $f_1$ and $f_2$ are compatible on corresponding boundary components, we get a homeomorphism $\mathfrak{g}(f_1, f_2) \in \Homeo(N).$ However, even if $f_1, f_2 \in \Diff^\infty(M_i),$ it does not follow that $\mathfrak{g}(f_1)$ or $\mathfrak{g}(f_1, f_2) \in \Diff^\infty(N)$: they will probably fail to be smooth across glued boundary components.

The goal of this article is to give a systematic procedure to apply to $f_1$ and $f_2,$ so that when the resulting maps are glued together we get a diffeomorphism of $N.$ The idea is simple: to make points near shared boundaries move approximately like points on the boundaries, we apply a topological conjugacy which ``crushes'' nearby points very strongly towards the shared boundaries. This has the effect of removing (up to all orders of derivatives) any motion under $f_1$ or $f_2$ that is transverse to the shared boundaries.

Our conjugacy is an example of what Kloeckner \cite{Kloeckner} calls \emph{stretching}. He considers the question of when smooth or analytic group actions which are topologically conjugated by a stretch are in fact smoothly conjugate.

Some authors have considered the gluing of group actions. Katok-Lewis \cite{K&L} consider the linear action of $SL(n, \Z)$ on $T^n.$ They blow up at the global fixed point 0, introducing a copy of $S^{n - 1}$ at that point. Since the original action was linear, two of these actions can be glued along $S^{n - 1},$ and the result is real-analytic. They even make it volume-preserving. Farb-Shalen \cite{F&S} do a similar construction in dimension 3; they consider a finite $SL(3, \Z)$-invariant set. To our knowledge, our construction smoothing out general glued group actions (not assumed to originate from linear actions) is new. The proof is based on an article of Tsuboi \cite{Tsuboi}.

Our result has important applications to the study of smooth group actions. If a group $G$ acts smoothly on $M_1$ and also acts smoothly on $M_2,$ and the actions agree on common boundary components, by applying our result we get a smooth action of $G$ on $N$; since our procedure involves conjugation, it respects the group structure. We give examples in which the group $G$ is the discrete Heisenberg group in the first ``applications'' section.

Another use of our result is to extend theorems about group actions on manifolds without boundary to the case of manifolds with boundary. The idea is, if we have a manifold with boundary $M,$ we can form the \textit{double} $D(M),$ which is gotten by taking two copies of $M$ and gluing corresponding boundary components. By our theorem, any smooth action of $G$ on $M$ yields a smooth action of $G$ on $D(M),$ and if we know something about actions of $G$ on $D(M)$ then we may be able to draw a similar conclusion for actions of $G$ on $M.$ In the second ``applications'' section, we use this idea to show that a result of Franks and Handel (\cite{F&H}) on invariant measures for distortion elements in surface diffeomorphism groups also holds for compact surfaces with boundary.

\vspace{12pt}
\noindent
\textit{Acknowledgements}. The author would like to thank his advisor, John Franks, for many useful discussions and ideas. The author also thanks Benson Farb for a helpful conversation. Finally, the author thanks Kamlesh Parwani for pointing out the necessity of checking that the theorem works when $\Diff^\infty$ is replaced with $\Diff^r.$

\section{Main Result}
Let $M_1, M_2$ be smooth $n$-manifolds with boundary. We will glue boundary components of $M_1$ and $M_2.$ Our proof also implies the corresponding result if we are identifying boundary components of a single manifold.

Let $\{(C_i)_1\},$ $\{(C_i)_2\}$ be components of $\partial M_1$ and $\partial M_2,$ respectively. There are at most countably many; they are smooth $(n - 1)$-manifolds without boundary. Suppose there exist diffeomorphisms $\alpha_i \colon (C_i)_1 \to (C_i)_2.$ Write $C_1 = \cup_i (C_i)_1$ and $C_2 = \cup_i (C_i)_2.$ Define $\alpha \colon C_1 \to C_2$ by $\alpha(x) = \alpha_i(x)$ for $x \in (C_i)_1.$ We may form a new manifold $N$ by gluing the boundaries according to these diffeomorphisms: $N = M_1\sqcup M_2/\sim,$ where $x \sim \alpha(x)$ for $x \in C_1.$ Let $\pi \colon M_1\sqcup M_2 \to N$ be the projection.

We may endow $N$ with smooth structure as follows. Away from the glued boundaries, we use the smooth structure of $M_1$ or $M_2.$ On the boundary, we rely on product neighborhoods. Specifically, choose a neighborhood $U_1$ of $C_1$ diffeomorphic to $C_1 \times [0, 1)$; let $\eta_1 \colon U_1 \to C_1 \times [0, 1)$ be a diffeomorphism with the property that for $x \in C_1, \eta_1(x) = (x, 0).$ Note that $(\alpha \times id) \circ \eta_1$ sends $U_1$ to $C_2 \times [0, 1).$ Similarly, take $U_2$ and $\eta_2$ such that $\eta_2 \colon U_2 \to C_2 \times (-1, 0]$ is a diffeomorphism and for $x \in C_2, \eta_2(x) = (x, 0).$ By putting together $(\alpha \times id) \circ \eta_1$ and $\eta_2$, we get a homeomorphism $\eta \colon \pi(U_1 \cup U_2) \to C_2 \times (-1, 1).$ We declare it to be a diffeomorphism.

Let $\mathcal{A} = \{(f_1, f_2) \colon f_1 \in \Homeo(M_1), f_2 \in \Homeo(M_2),$ and $\alpha \circ f_1|_{C_1} = f_2|_{C_2} \circ \alpha\}.$ Note that we do not require $(C_i)_j$ to be $f_j$-invariant. If $(f_1, f_2) \in \mathcal{A},$ they agree on their glued boundaries, so we get a glued map $\mathfrak{g}(f_1, f_2) \in \Homeo(N).$

\begin{theorem}
There exist homeomorphisms $\Psi_1$ and $\Psi_2$ of $M_1$ and $M_2$ with the following property. For any $f_i \in \Diff^\infty(M_i), \Psi_i^{-1}f_i\Psi_i \in \Diff^\infty(M_i).$ Furthermore, if $(f_1, f_2) \in \mathcal{A},$ then $\mathfrak{g}(\Psi_1^{-1}f_1\Psi_1, \Psi_2^{-1}f_2\Psi_2) \in \Diff^\infty(N).$
\end{theorem}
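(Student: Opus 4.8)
The plan is to build each $\Psi_i$ as a \emph{stretch} supported in a collar of the glued boundary, and then to show that conjugation by a sufficiently flat stretch (i) preserves smoothness on each piece and (ii) flattens all transverse motion to infinite order, so that after gluing the two pieces fit together $C^\infty$ across the seam.

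First I would fix coordinates. Using $\eta_1$, identify the collar $U_1$ with $C_1 \times [0,1)$, with coordinates $(x,t)$, and declare $\Psi_1$ to be the identity outside $U_1$ and $\Psi_1(x,t) = (x,\psi(t))$ inside, where $\psi \colon [0,1) \to [0,1)$ is a homeomorphism, equal to the identity near $t = 1$, smooth and strictly increasing on $(0,1)$, with $\psi(0) = 0$ and all derivatives vanishing at $0$. Define $\Psi_2$ symmetrically on $C_2 \times (-1,0]$ using the reflected profile $s \mapsto -\psi(-s)$. Each $\Psi_i$ is then a homeomorphism of $M_i$ fixing $\partial M_i$ pointwise and smooth on the interior. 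To prove the first assertion, fix $f \in \Diff^\infty(M)$; since $f$ preserves the boundary, near a boundary point I may write $f(x,t) = (A(x,t), B(x,t))$ with $B(x,0) = 0$, and factor $B(x,t) = t\,\beta(x,t)$ with $\beta > 0$ smooth. A direct computation then gives
\[ \Psi^{-1} f \Psi(x,t) = \bigl( A(x,\psi(t)),\ \psi^{-1}\bigl(\psi(t)\,\beta(x,\psi(t))\bigr) \bigr). \]
The tangential part $A(x,\psi(t))$ is smooth and flat in $t$ at $t = 0$ because $\psi$ is; the content is that the transverse part extends to a smooth function up to $t = 0$. Away from the boundary everything is a composite of smooth maps, so $\Psi^{-1} f \Psi \in \Diff^\infty(M)$, its inverse being the conjugate of $f^{-1}$. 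Since $\Psi$ fixes the boundary, $\Psi^{-1} f \Psi$ and $f$ agree on $\partial M$, so $(f_1, f_2) \in \mathcal{A}$ implies $(\Psi_1^{-1} f_1 \Psi_1, \Psi_2^{-1} f_2 \Psi_2) \in \mathcal{A}$ and the glued map is at least defined.

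For the gluing assertion I would transport both conjugates to the chart $C_2 \times (-1,1)$, the $M_1$ side occupying $s \ge 0$ and the $M_2$ side $s \le 0$. The compatibility $\alpha \circ f_1|_{C_1} = f_2|_{C_2} \circ \alpha$ makes the two tangential parts agree along $s = 0$, and since both are flat in $s$ there, they glue $C^\infty$ tangentially. Each transverse part has the form $s + r(y,s)$ with $r$ smooth up to $s = 0$; the heuristic $r \approx (\log \beta)\cdot(\psi/\psi')$ shows that if $\psi$ is chosen so flat that $\psi/\psi'$ and all its derivatives vanish at $0$, then $r$ and all its derivatives vanish at the boundary, for every fixed $f$. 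Granting this, both transverse parts equal $s$ plus a flat remainder, so they agree to infinite order across $s = 0$ and glue $C^\infty$; hence $\mathfrak{g}(\Psi_1^{-1} f_1 \Psi_1, \Psi_2^{-1} f_2 \Psi_2)$ is smooth near the seam, and smooth elsewhere since each conjugate already is, with smooth inverse by the same argument applied to $(f_1^{-1}, f_2^{-1})$. I note that mere flatness of $\psi$ (e.g.\ $\psi(t) = e^{-1/t}$) is \emph{not} enough: there the transverse remainder is only $O(s^2)$, and the two sides' second derivatives involve the unrelated transverse derivatives of $f_1$ and $f_2$ at the boundary, so the seam would be merely $C^1$.

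The hard part is exactly this last flatness estimate: showing that a single, sufficiently flat $\psi$ forces the transverse remainder $r$ and \emph{all} of its derivatives to vanish at the boundary, uniformly in the sense needed, for every smooth $f$. This amounts to controlling the derivatives of $\psi^{-1}\bigl(\psi(t)\,\beta(x,\psi(t))\bigr)$ to all orders via a Fa\`a di Bruno expansion, using the structural relations between $\psi$ and its derivatives to dominate the fixed (but arbitrary) derivatives of $\beta$. This is the technical heart of the argument, and it is where the estimates of Tsuboi \cite{Tsuboi} are the key input.
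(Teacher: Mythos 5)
Your overall strategy coincides with the paper's: conjugate by a stretch $(x,t)\mapsto(x,\psi(t))$ supported in a collar of the glued boundary, show that conjugation preserves smoothness on each piece, and show that the conjugate agrees to infinite order with (boundary map) $\times\,\mathrm{id}$, so the two sides match $C^\infty$ across the seam. You even correctly diagnose that $\psi(t)=e^{-1/t}$ alone is insufficient. But there is a genuine gap: the two assertions that carry all of the mathematical content --- that the transverse part $\psi^{-1}\bigl(\psi(t)\,\beta(x,\psi(t))\bigr)$ extends smoothly to $t=0$, and that the transverse remainder together with all its derivatives vanishes at the seam --- are never proven. You label them ``the content'' and ``the hard part,'' grant them, and defer to an uncarried-out Fa\`a di Bruno expansion and to ``the estimates of Tsuboi.'' Moreover, you never commit to a specific $\psi$: ``sufficiently flat'' is not a usable hypothesis, because smoothness of $\psi^{-1}(\psi(t)\,\beta(x,\psi(t)))$ up to $t=0$ is not a consequence of flatness alone but of the particular algebraic structure of $\psi$, so it is not even clear that a $\psi$ with the properties you require exists.

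The missing idea is the paper's explicit choice and factorization: take the stretch to be $\chi=\phi\circ\phi$ near $0$, with $\phi(y)=e^{-1/y}$, and prove the two claims for conjugation by the single map $\Phi(x,y)=(x,\phi(y))$, applying the smoothness claim twice to handle $\Phi^2$. Because $\phi^{-1}(u)=-1/\log u$, conjugation by $\Phi$ becomes a one-line computation: writing $g_y=y\,h$ with $h$ smooth and nonvanishing (Taylor's theorem), one finds
\[
(\Phi^{-1}g\Phi)_y \;=\; \frac{y}{\,1-y\log\bigl(h(x,\phi(y))\bigr)\,},
\]
which is visibly $C^\infty$ --- the logarithm converts the multiplicative perturbation $h$ into a harmless additive one, and no Fa\`a di Bruno analysis is needed. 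The infinite-order agreement at the seam is then proven by elementary Taylor estimates comparing $\Phi^{-2}g\Phi^{2}$ with $\bar g\times Id$; here the double composition $\phi\circ\phi$ is exactly what upgrades the quadratic gain you noticed (for $\psi=e^{-1/t}$ one has $\psi/\psi'=t^2$, whence only $C^1$ gluing) to vanishing of all orders, since $\frac{1}{\log(e^{1/y}-C)}-y\sim Cy^2 e^{-1/y}$ as $y\to 0$. So your outline points in the right direction, but without an explicit $\chi$ and these two computations the proof is not complete; the paper's factorization trick is precisely what makes the ``hard part'' tractable.
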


\begin{proof}
We will define $X_1 \colon C_1 \times [0, 1) \to C_1 \times [0, 1)$ below. Then $X_2 \colon C_2 \times (-1, 0]$ will be given by $X_2 = (\alpha \times -id)\circ X_1 \circ (\alpha \times -id)^{-1},$ and we will set

\begin{displaymath}
   \Psi_i(x) = \left\{
     \begin{array}{cl}
     \eta_i^{-1}(X_i(\eta_i(x))), & x \in U_i\\
       x, & x \in M_i \setminus U_i
       \end{array}
   \right.
   \end{displaymath}
   for i = 1, 2.

We will construct $X_1$ so that it satisfies the following properties:
\begin{enumerate}
\item It is of the form $X_1(x, y) = (x, \chi(y))$
\item $\chi$ is a $C^\infty$ diffeomorphism of $(0, 1)$ with $\chi(y) = \phi^2(y)$ (i.e. $\phi(\phi(y))$) in a neighborhood of $0$, where $\phi(y) = e^{-1/y}$, and $\chi(y) = y$ in a neighborhood of $1$.
\end{enumerate}

In all that follows, we will not worry about domains and codomains. This is because we are only concerned with local behavior; we could if desired specify (co)domains, but which choices we made would not affect our calculations. Technically, we are dealing with \textit{germs} of maps, but we leave this implicit to avoid cumbersome notation.

Let $p \in C_1$. Let $\Phi$ be defined near $(p, 0)$ in $C_1 \times [0, 1)$ to be $\Phi(x, y) = (x, \phi(y))$. We claim that (1) if $g$ is $C^\infty$ at $(p, 0)$, then so is $\Phi^{-1}g\Phi$. Since locally $X_1^{-1}gX_1 = \Phi^{-2}g\Phi^2$, applying (1) twice implies that $X_1^{-1}gX_1$ is also $C^\infty$ at $(p, 0)$. Let $g_x$ and $g_y$ denote the first and second components of $g$ (which is defined on some open set in $C_i \times [0, 1)$). Then we further claim that (2) $X_1^{-1}gX_1$ looks to all orders like $(x, y) \mapsto (g_x(x, 0), y)$ at $(p, 0)$. These claims follow Tsuboi \cite{Tsuboi}.

Let us first show why claims (1) and (2) are enough to establish the theorem. Let $(f_1, f_2) \in \mathcal{A}.$ The map $\eta_1f_1\eta_1^{-1}$ is $C^\infty$ at $(p, 0),$ and $X_1^{-1}\eta_1f_1\eta_1^{-1}X_1$ looks like $(x, y) \mapsto ((\eta_1f_1\eta_1^{-1})_x(x, 0), y)$ to all orders at $(p, 0).$ Similarly, $\eta_2f_2\eta_2^{-1}$ is $C^\infty$ at $(\alpha(p), 0),$ and $X_2^{-1}\eta_2f_2\eta_2^{-1}X_2$ looks like $(x, y) \mapsto ((\eta_2f_2\eta_2^{-1})_x(x, 0), y)$ to all orders at $(\alpha(p), 0).$ Therefore, when we glue $(\alpha \times id)X_1^{-1}\eta_1f_1\eta_1^{-1}X_1(\alpha \times id)^{-1}$ and $X_2^{-1}\eta_2f_2\eta_2^{-1}X_2,$ the result is $C^\infty.$ The fact that $\mathfrak{g}(f_1, f_2) \in \Diff^\infty(N)$ follows, since we declared $\eta$ to be a diffeomorphism.

Let us consider claim (1). We must show that $\Phi^{-1}g\Phi$ is $C^\infty$. The only difficulty is that $\Phi^{-1}$ is not differentiable when $y = 0$ (it stretches too strongly there). The first component of $\Phi^{-1}g\Phi$ -- the one in $C_i$, which we may denote $(\Phi^{-1}g\Phi)_x$ -- is not affected by $\Phi^{-1}$, so it is automatically $C^\infty$.

Consider the second component, $(\Phi^{-1}g\Phi)_y$. Since $g_y(x, 0)$ vanishes for all $x \in C_i$, by Taylor's theorem $\frac{g_y}{y}$ extends continuously to $y = 0$; it is $C^\infty$ and nonvanishing at $y = 0$. Let us denote this by $h$, so $g_y = y\cdot h$. Then

\begin{displaymath}
   \begin{array}{lll}
       (\Phi^{-1}g\Phi)_y & = & -1/\log(g_y(x, \phi(y)))\\
       \noalign{\medskip}
       & = & -1/\log(\phi(y)h(x, \phi(y)))\\
       \noalign{\medskip}
       & = & y/(1 - y\log(h(x, \phi(y)))).
     \end{array}
\end{displaymath}

Therefore, $(\Phi^{-1}g\Phi)_y$ is $C^\infty$. So we have established claim (1).

Now we consider claim (2). If $\bar{g}$ denotes the restriction of $g$ to $C_1 \times \{0\}$, then we must show that $\Phi^{-2}g\Phi^2$ and $\bar{g} \times Id$ are $C^\infty$-close at $(p, 0)$. Introducing local coordinates about $p$ and about $\bar{g}(p)$ in $C_1$ which send $p$ and $\bar{g}(p)$ respectively to the origin in $\R^{n - 1}$, we can assume we are looking at maps (locally defined around the origin) of $\R^{n - 1} \times [0, 1)$ (points of which can be denoted $(\vec{x}, y)$, where $\vec{x} = (x_1, \dots, x_{n - 1})$). In this context, we want to show that $\Phi^{-2}g\Phi^2 - \bar{g} \times Id \eqqcolon G$ has all partial derivatives of all orders equal to $0$ at the origin. (Note that it does not matter which local coordinates we chose.)

We use the following notation: subscript $y$ means the $y$-component as before; subscript $i$ for $1 \leq i \leq n - 1$ means the $x_i$-component. Assume that $G_y$ does not have all partials equal to $0$ at the origin. Then there is some term $x_1^{i_1}\cdots x_{n - 1}^{i_{n - 1}}y^{i_n}$ in the Taylor series for $G_y$ with nonzero coefficient. Let $N = i_1 + \dots + i_n$. By Taylor's theorem, $G_y(\vec{x}, y) = P_N(\vec{x}, y) + R_N(\vec{x}, y)$, where $P_N$ is the Taylor polynomial up to degree $N$, and $R_N$ can be written in the form $$\sum_{j_1 + \dots + j_n = N} c_{j_1,\dots,j_n}(\vec{x}, y)x_1^{j_1}\cdots x_{n - 1}^{j_{n - 1}}y^{j_n},$$ where each $c_{j_1,\dots,j_n} \to 0$ as $(\vec{x}, y) \to 0$. Therefore, $\max_{r(\vec{x}, y) = r_0} G_n \sim r_0^N$ as $r_0 \to 0$.

On the other hand, when $(\vec{x}, y)$ is close to $0$, $g_y(\vec{x}, y)$ is close to $ay$, where $a = \frac{\partial g_y}{\partial y}(0)$ (which is greater than $0$); in particular, in a neighborhood of $0$, $$\frac{a}{2}y \leq g_y(\vec{x}, y) \leq 2ay.$$ This implies that $$\frac{1}{\log(e^{1/y} - \log(\frac{a}{2}))} - y \leq G_y(\vec{x}, y) \leq \frac{1}{\log(e^{1/y} - \log(2a))} - y.$$ Since $\frac{1}{\log(e^{1/y} - C)} - y \sim Cy^2/e^{\frac{1}{y}}$ as $y \to 0$, it follows that for $(\vec{x})$ in a small enough interval about $0$, $G_y(\vec{x}, y) \to 0$ faster than any polynomial as $y \to 0$.

We can use the same type of reasoning to show that $G_i$ ($1 \leq i \leq n - 1$) vanishes to all orders at $0$. If it did not, then we could find $(\vec{x}, y)$ nearby such that $G_i(\vec{x}, y)$ is only polynomially small in terms of $r(\vec{x}, y)$. We will show that this is not the case.

For any $\vec{x}$, for $y$ sufficiently close to $0$, the difference between $g_i(\vec{x}, y) - g_i(\vec{x}, 0)$ and $\frac{\partial g_i}{\partial y}(\vec{x}, 0)\cdot y$ is small relative to $y$. Indeed, if $N$ is a sufficiently small neighborhood of the origin, we will have $$|g_i(\vec{x}, y) - g_i(\vec{x}, 0) - \frac{\partial g_i}{\partial y}(\vec{x}, 0)\cdot y| < y$$ for all $(\vec{x}, y) \in N$.


Now $G_i(\vec{x}, y) = g_i(\vec{x}, \phi^2(y)) - g_i(\vec{x}, 0)$; for $(\vec{x}, y) \in N$, $$|g_i(\vec{x}, \phi^2(y)) - g_i(\vec{x}, 0) - \frac{\partial g_i}{\partial y}(\vec{x}, 0)\cdot \phi^2(y)| < \phi^2(y),$$ so $$|g_i(\vec{x}, \phi^2(y)) - g_i(\vec{x}, 0)| < \phi^2(y) + |\frac{\partial g_i}{\partial y}(\vec{x}, 0)|\cdot \phi^2(y).$$ But $|\frac{\partial g_i}{\partial y}(\vec{x}, 0)|$ is bounded above on $N$, so we have what we sought: $G_i(\vec{x}, y) \to 0$ faster than any polynomial as $(\vec{x}, y) \to 0$. This finishes the proof of claim (2).
\end{proof}

\begin{remark}
We may replace $\Diff^\infty$ with $\Diff^r$ in the statement of the theorem. Most of the proof is unchanged. We must do slightly more for claim (1), that is, showing that conjugation of a $C^r$ map by $\Phi$ is still $C^r$. This is because the map ``$h$'' may be only $C^{r - 1}.$ But it can be seen by an inductive argument that $(x, y) \mapsto h(x, \phi(y))$ is $C^r.$
\end{remark}

\section{Application: Constructing New Group Actions}
In this section, we use our main result to construct new examples of smooth actions of the discrete Heisenberg group $H$ on surfaces. This is the group of matrices 

\begin{displaymath} H =
\left\{\left( \begin{array}{ccc}
1 & a & c \\
0 & 1 & b \\
0 & 0 & 1 \end{array} \right)\colon a, b, c \in \Z \right\}.
\end{displaymath}

$H$ is generated by the elements $X = \left( \begin{array}{ccc}
1 & 1 & 0 \\
0 & 1 & 0 \\
0 & 0 & 1 \end{array} \right)$ and
$Y = \left( \begin{array}{ccc}
1 & 0 & 0 \\
0 & 1 & 1 \\
0 & 0 & 1 \end{array} \right)$. The commutator is
$Z = [X, Y] = XYX^{-1}Y^{-1} = \left( \begin{array}{ccc}
1 & 0 & 1 \\
0 & 1 & 0 \\
0 & 0 & 1 \end{array} \right)$. Since $X$ and $Y$ commute with the commutator $Z$, $H$ is a $2$-step nilpotent group. Intuitively, $H$ is ``close to abelian.'' Therefore, it is natural to consider actions of $H$ by diffeomorphisms, as a first step to extending what we know about the dynamics of diffeomorphisms and abelian groups of diffeomorphisms to the non-abelian setting. There are no relations besides two generators commuting with their commutator, so whenever we have a group with three elements $f, g,$ and $h$ of infinite order such that $[f, g] = h, fh = hf,$ and $gh = hg,$ this group is isomorphic to the Heisenberg group.

$H$ obviously acts linearly on $\R^3.$ We may ``projectivize'' this to get an action on $S^2$ which is faithful and real-analytic. Namely, let $S^2$ be the unit sphere in $\R^3.$ Define $\phi \colon H \to \Diff(S^2)$ as follows: for $x \in S^2$ and $A \in H,$ define $\phi(A)(x) = \frac{Ax}{|Ax|}.$ This action has two \textit{global fixed points}, points fixed by every element of the group: $(\pm 1, 0, 0).$

We may puncture at one of these fixed points, so we get an action of $H$ on the open disk. If we also puncture at the other fixed point, we get an action of $H$ on the open annulus. In fact, there is a canonical way of compactifying to get a closed disk $D$ or closed annulus $A,$ called ``blowing up,'' which we will describe. By doing a blow up at one or both fixed points, we get an action of $H$ on the closed disk and on the closed annulus.

\textit{Blowing up.} For reference, see Melrose \cite{Melrose}. Let us first consider blowing up the origin in $\R^n.$ Intuitively, we will remove the origin, and insert an $(n - 1)$-sphere there. Define the blow up to be $\beta(\R^n, 0) = S^{n - 1} \times [0, \infty),$ together with the blow-down map $\beta \colon \beta(\R^n, 0) \to \R^n$ given by $\beta(\theta, r) = r\theta$ ($S^{n - 1}$ is identified with the unit sphere in $\R^n$). This is a diffeomorphism of $S^{n - 1} \times (0, \infty)$ to $\R^n \setminus \{0\}.$ Given a smooth map $f \colon \R^n \to \R^n$ fixing the origin, we get an induced map $\tilde{f} \colon \beta(\R^n, 0) \to \beta(\R^n, 0),$ defined as follows:

\begin{displaymath}
\tilde{f}(x) = \left\{
     \begin{array}{cl}
     \beta^{-1}f\beta(x), & x \in S^{n - 1} \times (0, \infty)\\
     (\frac{D_0f(\theta)}{|D_0f(\theta)|}, 0), & \text{else.}
     \end{array}
   \right.
   \end{displaymath}

Here $S^{n - 1}$ is being identified with the unit tangent space of $\R^n$ at $0.$ It is a standard result that if $f$ was $C^\infty,$ then so is $\tilde{f}$; see \cite{Melrose}. Blow ups can also be done on manifolds. To blow up at $x \in M,$ take a diffeomorphism $\phi\colon U \to \R^n$ for a neighborhood $U \ni x,$ such that $\phi(x) = 0.$ Define $\beta(M, x) = ((M \setminus x) \cup S^{n - 1} \times [0, \infty))/\sim,$ where for any $y \in U \setminus x, y \sim \beta^{-1}(\phi(y)),$ $\beta$ being the blow-down defined above. $\beta(M, x)$ is a smooth manifold with boundary. For a map $f \colon M \to M$ fixing $x,$ we get a map $\tilde{f},$ as follows:

\begin{displaymath}
\tilde{f}(x) = \left\{
     \begin{array}{cl}
     f(x), & x \in M \setminus x\\
     \frac{D_0(\phi f\phi^{-1})(\theta)}{|D_0(\phi f\phi^{-1})(\theta)|}, & \text{else.}
     \end{array}
   \right.
   \end{displaymath}

Again, if $f \in \Diff^\infty(M)$ and $f$ fixes $x$, then $\tilde{f} \in \Diff^\infty(\beta(M, x)).$ Also note that if both $f$ and $g$ fix $x,$ then $\widetilde{f \circ g} = \tilde{f} \circ \tilde{g}.$ Therefore, the action of $H$ we have described on $S^2$ yields smooth actions on the closed disk and on the closed annulus, by blowing up at global fixed points. For a picture on the closed disk, see Figure 1.

The green points in the action of $Y$ have the following significance. Let $e$ (for equator) be the horizontal line through the center of the disk, and $\partial D$ the boundary; $D \setminus (e \cup \partial D)$ has two components, which we may denote $D^u$ and $D^l$ (upper and lower). For every $x \in D^u, \omega(x)$ is the right green point, where $\omega(x)$ is the $\omega$ limit set of $x.$ For every $x \in D^l, \omega(x)$ is the left green point. The action of $H$ on the closed annulus is similar; we take the action on the closed disk, and blow up at the global fixed point at the center.

Since the given actions of $H$ on $D$ and $A$ agree on the boundary components, we may glue them together, or glue the two boundary components of the annulus. The standard coordinates for the blow up may be defined by taking coordinates on the boundary sphere, and also recording $r.$ In these coordinates, the above Heisenberg actions are $C^1,$ but not $C^2.$ But by our theorem, after a change of coordinates the gluing is $C^\infty.$ See Figure 2.

In fact, we may glue arbitrarily many annuli $A_1, A_2, \dots, A_n$ concentrically, and get a smooth action of $H$ on the resulting annulus for which the $A_i$ are $H$-invariant. We may

\newpage

\begin{figure}[H]
  \centering
    \includegraphics[width=3in]{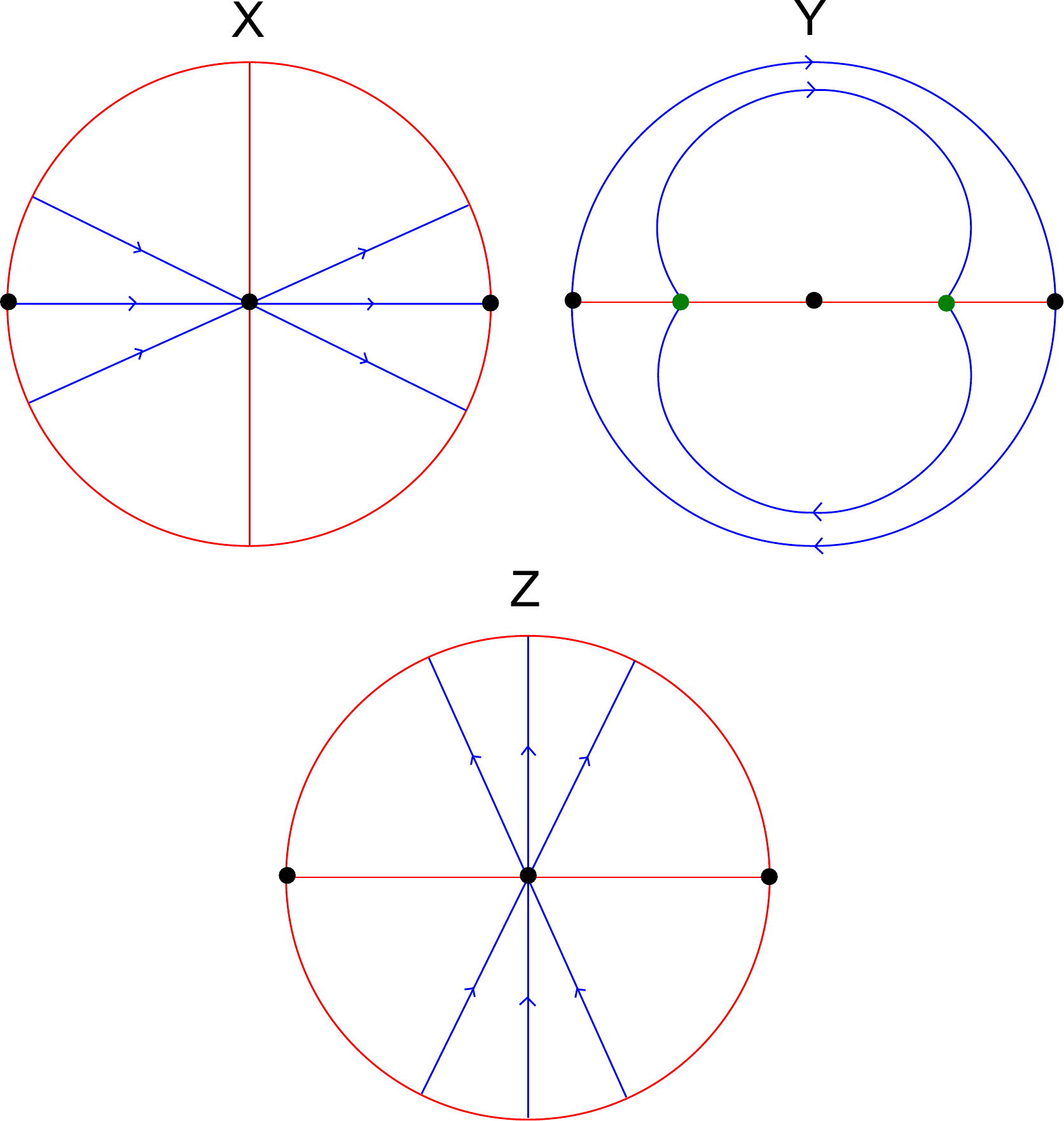}
    \caption{Action of $H$ on disk. \textcolor{Blue}{Blue:} Invariant curves; \textcolor{Red}{Red:} Fixed points; \textcolor{Black}{Black:} Fixed under whole group; \textcolor{Green}{Green:} Attractor for upper/lower half-disk}
    \vspace{12pt}
   \includegraphics[width=4.5in]{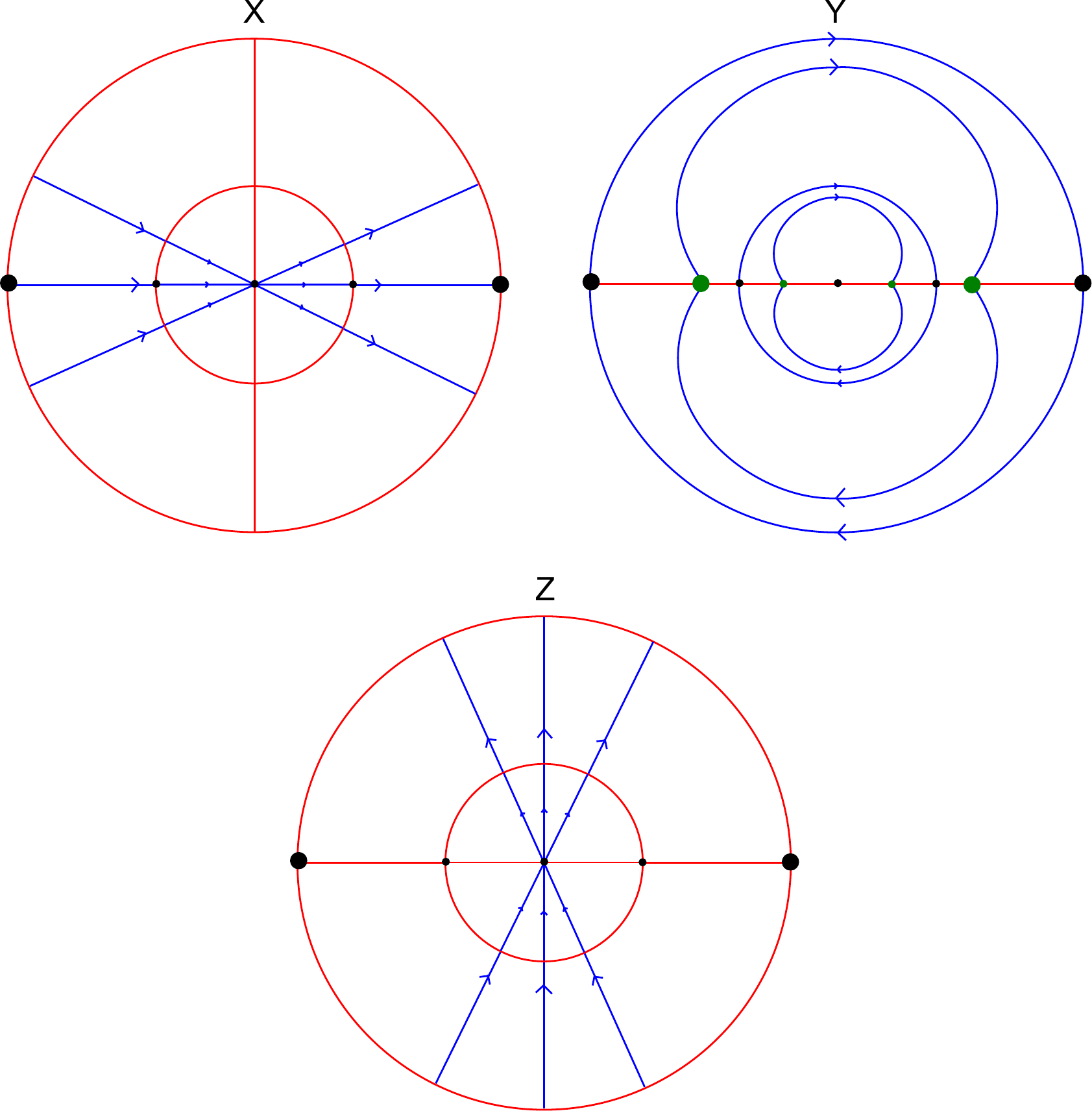}
    \caption{Action of $H$ on the disk and annulus, glued together}
\end{figure}

\noindent then glue in a disk, so we get an action on $D$; two disks, so we get an action on $S^2$; or a new annulus, so we get an action on $T^2.$ Or, we may glue the two boundary components of $\cup_i A_i,$ also yielding an action on $T^2.$ Finally, in these glued examples there are lots of global fixed points; we can blow them up and glue to get even more examples.

\begin{question}
Suppose we define an action on $N$ to be ``decomposable'' if $N$ decomposes into topological (not necessarily smooth) manifolds with boundary $M_1, M_2,$ which meet only on their boundaries, such that $M_1$ and $M_2$ are invariant. Then all the given $H$ actions are decomposable. Even the original actions on $D$ and $A$ decompose into invariant upper and lower halves, since the original action of $H$ on $S^2$ leaves the northern and southern hemispheres invariant.

Do there exist any faithful $C^\infty$ (or even $C^1$) actions of $H$ on $S^2$ which are not, in this sense, decomposable?
\end{question}

\begin{remark}
Danny Calegari \cite{Calegari} has given an example of a faithful $C^0$ action of $H$ on the sphere which is indecomposable. It is gotten as follows. First, note that in the linear action of $H$ on $\R^3,$ the plane $z = 1$ is invariant. This yields an affine action of $H$ on the plane, generated by $f(x, y) = (x + y, y),$ $g(x, y) = (x, y + 1)$ with commutator $h(x, y) = (x + 1, y).$

Let $\alpha$ be irrational, and let $T_{(\alpha, 0)}$ denote horizontal translation by $\alpha.$ We may take the quotient of the plane by $T_{(\alpha, 0)}$; the result is a cylinder. The action of $H$ on the plane descends to this cylinder, since both $f$ and $g$ commute with $T_{(\alpha, 0)}.$ This action is smooth. We may next compactify the two ends of the cylinder with points, so we get a sphere. The resulting action of $H$ on $S^2$ is only $C^0,$ and it cannot be conjugated to a $C^1$ example.

In this example, $f$ acts by twisting by an amount dependent on how high up we are. As we move up towards the north pole, or down to the south pole, it twists infinitely many times. $g$ acts by moving every point except the poles upward. $h$ is an irrational rotation of the sphere. It is easily seen that this example is indecomposable in the sense we have given. However, it does not satisfy the stronger condition of topological transitivity, namely, that there exist a point $x \in S^2$ whose orbit under $H$ is dense.
\end{remark}

\begin{question}
Does there exist a faithful $C^0$ action of $H$ on $S^2$ which is topologically transitive?
\end{question}

\begin{remark}
There does exist a faithful $C^\infty$ action of $H$ on the torus $T^2$ which is topologically transitive, indeed, which is minimal. Consider the action of $H$ on the plane given above. Let $\alpha$ be irrational. Let $T_{(\alpha, 0)}$ and $T_{(0, \alpha)}$ be irrational horizontal and vertical translations; if we take the quotient $\R^2/\langle T_{(\alpha, 0)}, T_{(0, \alpha)}\rangle$ we get a torus. We claim that the action of $H$ on the plane descends to this torus. Since $g$ is a translation, it commutes with $T_{(\alpha, 0)}$ and $T_{(0, \alpha)}.$ The skew map $f$ commutes with $T_{(\alpha, 0)}.$ It fails to commute with $T_{(0, \alpha)},$ but the commutator is $[f, T_{(0, \alpha)}] = T_{(\alpha, 0)},$ which is trivial on the torus.

If we denote the induced maps on the torus with bars, then $\bar{f}$ and $\bar{h}$ are vertical and horizontal irrational translations of the torus. Thus the orbit of any point is dense under this action of $H.$
\end{remark}

\section{Application: Generalizing a Result of Franks-Handel}
A useful notion from geometric group theory is that of \textit{distortion}.

\begin{definition}
If $G$ is a finitely generated group, and we choose the generating set $\{g_1, \dots, g_s\}$, then $f \in G$ is said to be a \textit{distortion element} of $G$ if $f$ has infinite order and

$$\liminf_{n \to \infty} \frac{|f^n|}{n} = 0,$$

where $|f^n|$ is the word length of $f^n$ in the generators $\{g_1, \dots, g_s\}$.
\end{definition}

\begin{remark}
We could have taken the liminf to be a limit; because word length is subadditive, the limit must exist, and is called the \textit{translation length} (see \cite{G&S}).

It is straightforward to see that the property of being a distortion element is independent of the finite generating set chosen. If $G$ is not finitely generated, we say that $f \in G$ is distorted in $G$ if it is distorted in some finitely generated subgroup of $G$.
\end{remark}

See Gromov \cite{Gromov} for a good discussion with many examples. One reason why distortion elements are interesting is that well-known groups have them. It is easy to check that the central elements of the three-dimensional Heisenberg group are distortion elements. Lubotzky, Mozes, and Raghunathan proved that irreducible nonuniform lattices in higher-rank Lie groups have distortion elements (\cite{LMR}).

\vspace{12pt}

Franks and Handel \cite{F&H} proved the following

\begin{theorem}
Let $S$ be a closed surface. Let $\Diff^1(S)_0$ denote $C^1$ diffeomorphisms isotopic to the identity. Let $f \in \Diff^1(S)_0$ be distorted. If $S = S^2,$ assume $f$ has at least 3 fixed points; if $S = T^2,$ assume $f$ has at least 1 fixed point. Then \newline(*) For any $f$-invariant Borel probability measure $\mu, \supp(\mu) \subset \Fix(f).$
\end{theorem}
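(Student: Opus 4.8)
The plan is to reduce the statement to two largely independent facts: first, that distortion forces the vanishing of an appropriate rotation vector of $\mu$; and second, that this vanishing, combined with the surface dynamics, forces $\supp(\mu) \subset \Fix(f)$. The first step is soft and exploits only the sublinear growth of word length; the second is the genuinely hard, surface-specific part, and it is where the fixed-point hypotheses earn their keep.

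For the first step I would fix a finitely generated subgroup $G = \langle g_1, \dots, g_s\rangle$ containing $f$ in which $f$ is distorted, pass to the universal cover $\widetilde{S}$, and choose lifts $\widetilde{g_j}$ and $\widetilde{f}$; this is exactly where the fixed-point assumptions on $S^2$ and $T^2$ are needed, to pin down canonical lifts. Since each generator is a fixed homeomorphism of a compact surface, its displacement cocycle $\widetilde{\Delta}_{g_j}(x) = \widetilde{g_j}(x) - x$ is bounded, say by $C$, in the lifted metric. The displacement cocycle is additive along words, so pointwise $\|\widetilde{\Delta}_{f^n}(x)\| \leq C|f^n|$; integrating the telescoping identity $\widetilde{\Delta}_{f^n} = \sum_{k=0}^{n-1} \widetilde{\Delta}_f \circ f^k$ against the $f$-invariant measure $\mu$ produces a rotation vector $\rho_\mu(f) \in H_1(S;\R)$ satisfying $\rho_\mu(f^n) = n\,\rho_\mu(f)$. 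Hence $n\|\rho_\mu(f)\| \leq C|f^n|$, and feeding in $\liminf_n |f^n|/n = 0$ gives $\rho_\mu(f) = 0$. The same estimate kills every homogeneous dynamical quasimorphism (e.g.\ the linking and rotation numbers of Gambaudo--Ghys type), which is the form the obstruction takes on higher-genus surfaces and on $S^2$, where $H_1$ is trivial and so step one is otherwise vacuous.

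The second step is the crux, and I would argue by contradiction. If $\supp(\mu) \not\subset \Fix(f)$, then passing to an ergodic component charging non-fixed points and invoking Poincaré recurrence yields a recurrent point that is not fixed. From such a recurrent orbit I would manufacture a \emph{nonzero} linking or rotation quantity using the theory of maximal isotopies and transverse (Le Calvez) foliations, or equivalently Brouwer plane-translation together with Thurston--Nielsen arguments; the hypothesis of enough fixed points ($\geq 3$ on $S^2$, $\geq 1$ on $T^2$) is precisely what lets one run the plane-translation machinery and excludes the exceptional rotation-like cases. This nonzero quantity is a dynamical quasimorphism value, contradicting the vanishing obtained in step one.

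The main obstacle is exactly this second step. Vanishing of the rotation vector is far weaker than support on $\Fix(f)$, because a priori a recurrent orbit can have average displacement zero through cancellation. Converting recurrence into a genuine, non-cancelling contribution to some quasimorphism is where the full strength of the Franks--Handel surface machinery is required, and it is the portion I would expect to consume essentially all of the work.
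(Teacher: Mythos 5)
First, a point of orientation: the paper does not prove this statement at all. It is quoted as a theorem of Franks and Handel \cite{F&H}, and the paper's own contribution is the subsequent corollary for compact surfaces with boundary, proved by doubling $S$ and invoking the gluing theorem. So there is no proof in the paper to compare yours against; the relevant benchmark is the original Franks--Handel argument.

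Measured against that, your step 1 is sound and is indeed how such arguments begin: bounded displacement of lifted generators plus (sub)additivity along words gives $n\|\rho_\mu(f)\| \leq C|f^n|$, and distortion then forces $\rho_\mu(f) = 0$, with the fixed-point hypotheses used to pin down canonical lifts. The genuine gap is step 2, which as you concede is not an argument but a pointer at machinery --- and it is the entire theorem. Moreover, the reduction you propose (vanishing of all homogeneous quasimorphisms plus existence of a recurrent non-fixed point yields a contradiction) is probably not even the right intermediate statement. Gambaudo--Ghys type invariants are built from an invariant measure of full support or an area form, which you do not have here; on $S^2$, where $H_1$ vanishes, your step 1 produces nothing, so literally everything is deferred to the unproved step; and the delicacy of which group a quasimorphism lives on is exactly the issue, since an irrational rotation of $S^2$ is a distortion element (Calegari--Freedman, cited as \cite{C&F} in the paper) for which every point is recurrent and Lebesgue measure is invariant, so ``recurrence plus vanishing of invariant quantities gives a contradiction'' is false with only two fixed points and cannot be repaired by hand-waving at Brouwer theory. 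What Franks and Handel actually do is use the distortion hypothesis a second time, in a form strictly stronger than quasimorphism vanishing: along a subsequence with $|f^{n_k}|/n_k \to 0$ they extract both subexponential growth of derivatives (hence zero topological entropy, using the $C^1$ hypothesis) and sublinear displacement growth in the universal cover, and then run Thurston--Nielsen theory relative to the fixed-point set and a recurrent orbit, where a pseudo-Anosov relative piece contradicts the entropy bound and a reducible/twist piece forces linear growth of displacement of geodesic arcs, contradicting sublinearity. The hypothesis of at least three fixed points on $S^2$ is what makes the relative mapping-class theory nondegenerate. Your outline cannot be completed without importing essentially all of this, so the proposal should be regarded as a correct first step followed by a restatement of the theorem rather than a proof of it.
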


In particular, (*) says that $f$ cannot be area-preserving, since the support of area is the whole surface, so $f$ would be the identity, which is not a distortion element.

\begin{remark}
The assumption on fixed points is necessary. Calegari and Freedman \cite{C&F} showed that an irrational rotation of $S^2$ or $T^2$ is a distortion element.
\end{remark}

Using our theorem, we have the following corollary of Franks and Handel's result:

\begin{corollary}
Let $S$ be a compact surface with nonempty boundary. Let $f \in \Diff^1(S)_0$ be distorted. If $S = D$ is the closed disk, assume that $f$ has at least 2 fixed points, at least one of which is not on $\partial D.$ If $S = A,$ assume $f$ has at least one fixed point. Then (*) holds.
\end{corollary}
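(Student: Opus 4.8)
The plan is to reduce the corollary to Franks and Handel's closed-surface theorem via the \emph{doubling} construction. Let $D(S)$ be the double of $S$, obtained by gluing two copies of $S$ along $\partial S$ through the identity; this is the special case $M_1 = M_2 = S$, $\alpha = \mathrm{id}$ of the Main Result. Crucially, $D(D) = S^2$ and $D(A) = T^2$, the two surfaces handled by Franks and Handel. Let $\Psi_1, \Psi_2$ be the homeomorphisms furnished by the theorem, using the $\Diff^r$ remark with $r = 1$. Since $\chi(0) = 0$, each $\Psi_i$ fixes $\partial S$ pointwise and is the identity off a collar; in particular it preserves the interior/boundary dichotomy, and for every $g \in \Diff^1(S)$ the pair $(\Psi_1^{-1} g \Psi_1, \Psi_2^{-1} g \Psi_2)$ lies in $\mathcal{A}$ (both restrictions to $\partial S$ equal $g|_{\partial S}$). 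Because conjugation and gluing each respect composition, setting $\rho(g) = \mathfrak{g}(\Psi_1^{-1} g \Psi_1, \Psi_2^{-1} g \Psi_2)$ defines a homomorphism $\rho \colon \Diff^1(S) \to \Diff^1(D(S))$; write $\hat f = \rho(f)$.

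First I would verify that $\hat f$ meets the three hypotheses of the closed-surface theorem. For \emph{distortion}, choose a finitely generated $G = \langle g_1,\dots,g_s\rangle \ni f$ in $\Diff^1(S)$ in which $f$ is distorted and apply $\rho$: if $f^n$ is a word of length $\ell_n$ in the $g_i$, then $\hat f^n = \rho(f^n)$ is the same word in the $\rho(g_i)$, so $|\hat f^n| \le \ell_n$ in $\rho(G)$, whence $\liminf |\hat f^n|/n = 0$; and $\hat f$ has infinite order since its restriction to one copy is $\Psi_1^{-1} f^n \Psi_1 \ne \mathrm{id}$. For \emph{isotopic to the identity}, observe that $\hat f$ is conjugate in $\Homeo(D(S))$, via the glued homeomorphism $\mathfrak{g}(\Psi_1,\Psi_2)$, to the plain double $\mathfrak{g}(f,f)$, which is isotopic to the identity by doubling an isotopy of $f$ to $\mathrm{id}$ on $S$; hence $\hat f$ is isotopic to the identity through homeomorphisms, and since for $S^2$ and $T^2$ the inclusion $\Diff^1 \hookrightarrow \Homeo$ induces a bijection on $\pi_0$, the $C^1$ diffeomorphism $\hat f$ lies in $\Diff^1(D(S))_0$.

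The \emph{fixed-point count} is the step I would treat most carefully, since it is exactly what forces the numerical hypotheses on $S$. Because each $\Psi_i$ fixes $\partial S$ and preserves the interior, $\Fix(\Psi_i^{-1} f \Psi_i) = \Psi_i^{-1}(\Fix f)$ splits into interior and boundary points just as $\Fix f$ does. Therefore each interior fixed point of $f$ contributes two fixed points of $\hat f$ (one in each copy), while each boundary fixed point contributes a single point on the gluing locus. If $S = D$, the hypothesis supplies at least one interior and at least two total fixed points, so $\hat f$ has at least $2\cdot 1 + 1 = 3$ fixed points on $S^2$; if $S = A$, the single fixed point gives at least one on $T^2$. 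Thus the closed-surface theorem applies to $\hat f$.

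Finally I would transfer the conclusion back to $S$. Given an $f$-invariant Borel probability measure $\mu$ on $S$, set $\nu = (\Psi_1^{-1})_* \mu$, supported on the first copy of $S$ inside $D(S)$. Since $\hat f$ preserves that copy and acts there as $\Psi_1^{-1} f \Psi_1$, the measure $\nu$ is $\hat f$-invariant, so Franks and Handel give $\supp \nu \subset \Fix \hat f$. Intersecting with the first copy yields $\supp \nu \subset \Psi_1^{-1}(\Fix f)$, and because $\supp \nu = \Psi_1^{-1}(\supp \mu)$ we conclude $\supp \mu \subset \Fix f$, which is $(*)$. The main obstacle is ensuring $\hat f$ really lands in $\Diff^1(D(S))_0$ with enough fixed points: the identity-component claim leans on the surface-specific fact that $C^1$ and topological isotopy classes coincide on $S^2$ and $T^2$, and the fixed-point bookkeeping is precisely what dictates the otherwise opaque hypotheses (two fixed points, at least one interior, for the disk).
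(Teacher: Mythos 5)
Your proposal is correct and follows essentially the same route as the paper: double the surface, smooth the glued action via the Main Result in its $C^1$ form, count fixed points exactly as you do (interior points doubling, boundary points counting once), and transfer invariant measures between $S$ and $D(S)$ using that $\Psi$ fixes the boundary and conjugation/gluing respect composition. The only differences are cosmetic --- you apply Franks--Handel directly to $\hat f$ and pull the conclusion back, where the paper runs the same ingredients as a proof by contradiction (pushing forward a measure violating (*) and contradicting preservation of distortion) --- and you additionally verify that $\hat f$ lies in $\Diff^1(D(S))_0$, a point the paper leaves implicit.
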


\begin{proof}
Assume, by way of contradiction, that (*) does not hold. Thus there is some $f$-invariant measure $\mu$ such that $\supp(\mu) \not\subset \Fix(f).$ Let $M_i = S \times \{i\}$ for $i = 1, 2.$ Glue each boundary component of $M_1$ to the corresponding one in $M_2,$ in the obvious way: for $x \in \partial S, \alpha(x, 1) = (x, 2).$ Choose a smooth structure on the result, as described above. The result can be called the \textit{double} of $S, D(S).$

By Remark 2 following the theorem, we can choose conjugacies $\Psi_1$ and $\Psi_2$ to make the glued map $\tilde{f} = \mathfrak{g}(\Psi_1^{-1}f_1\Psi_1, \Psi_2^{-1}f_2\Psi_2)$ be $C^1.$ In fact, since $M_1$ and $M_2$ are both copies of $S,$ we can find a single conjugacy $\Psi \in \Homeo(S)$ and let $\Psi_i = \Psi \times id, i = 1, 2.$

Any fixed point for $f$ in the interior $\Int(S)$ yields two fixed points for $\tilde{f},$ one on each side. A fixed point for $f$ on $\partial S$ yields one fixed point for $\tilde{f}.$ The hypotheses given in the corollary guarantee that $\tilde{f}$ has as many fixed points as the theorem of Franks and Handel requires: one for $T^2$ and three for $S^2.$

There will be an invariant probability measure not supported on the fixed point set of $\tilde{f}.$ Namely, $\Psi^{-1}f\Psi$ preserves the measure $\Psi^{-1}_*(\mu),$ whose support is $\supp(\Psi^{-1}_*(\mu) = \Psi^{-1}(\supp(\mu)).$ On the other hand, $\Fix(\Psi^{-1}f\Psi) = \Psi^{-1}(\Fix(f)).$ So $\supp(\Psi^{-1}_*(\mu)) \not\subset \Fix(\Psi^{-1}f\Psi).$ If we take $\Psi^{-1}_*(\mu)$ on both copies of $S,$ we get an invariant measure for $\tilde{f}$ not supported in $\Fix(\tilde{f}).$

Therefore, by Franks and Handel, $\tilde{f}$ is not distorted in $\Diff^1(D(S))_0.$ But the map $f \mapsto \tilde{f}$ is a 1-1 homomorphism, so it preserves the property of being a distortion element, a contradiction.
\end{proof}

\begin{remark}
We can slightly weaken the assumptions: If $S = D,$ and a distortion element $f$ has at least 2 fixed points anywhere, or has a fixed point on the boundary, then (*) holds. Possibly even this fixed point assumption can be dropped. So we have the following question:
\end{remark}

\begin{question}
Is an irrational rotation of the disk distorted in $\Diff^1(D)_0$?
\end{question}

\end{document}